\newcommand {\Neg}{\mathscr N}
\newcommand {\La}{\mathscr L}
\newcommand {\Meas}{\mathscr M}    
\begin{document} 
\title[Kelley's Theorem]
{Kelley's Theorem and Some Related Results}
\mybic
\date \today 
\subjclass[2010]{
Primary: , 
Secondary: 
.} 
\keywords{
Absolute continuity, 
Market completeness.
}

\begin{abstract} 
We give an elementary proof of Kelley's theorem
based on a minimax argument. Some applications
to related problems are also developed.
\end{abstract}

\maketitle

\section{Introduction and notation.}
The well known problem launched decades ago by
Dorothy Maharam \cite{maharam} of whether a 
Boolean algebra admits a strictly positive, additive 
set functions defined thereon -- the so called 
Maharam problem -- has motivated a long lasting 
stream of mathematical research, see \cite{jech} 
for a comprehensive review. In much of this literature 
the focus has been on complete (or $\sigma$ complete) 
Boolean algebras and countably additive set functions. 
One of the first papers on this topic was that of Kelley
\cite{kelley} and it was also one of the few treating 
the case of finitely additive set functions. His approach 
in terms of intersection numbers is still one of the few 
results characterizing the situation of a finitely additive, 
strictly positive set function. Another one was obtained 
much later by Jech et al \cite{balcar_jech_pazak}.
 
In this paper we present a very simple proof of this 
important result based on the minimax theorem. 
Another simple proof was obtained in recent years 
by Aversa and Bhaskara Rao \cite{aversa_rao} using 
linear programming (see other references quoted therein).
We also develop a number of implications that justify
interest for the method proposed here.

In this paper terms such as measure or probability
will always refer to finitely additive set functions.

\section
{Kelley's Theorem.}

Let $\A$ be an algebra of subsets of some non empty 
set $\Omega$ and $\A_+=\A\setminus\{\emp\}$.
$\Prob(\A)$ designates  the family of (finitely additive) 
probabilities defined on $\A$ and, for each $m\in\Prob(\A)$, 
let $m(f)$ indicate the integral of $f$ with respect to 
$m$ -- if well defined. $m\in\Prob(\A)$ is strictly positive 
if $m(A)>0$ for all $A\in\A_+$.

For given $\Bor\subset\A_+$ write the set of finite 
sequences from $\Bor$ as $\Seq[0]\Bor$. With each 
$\beta\in\Seq[0]\Bor$%
\footnote{
The elements of $\beta\in\Seq[0]\Bor$ need of
course not be distinct.
} 
we can associate the following function on $\Omega$:
\begin{equation}
s(\beta)
	=
\frac{1}{\abs\beta}\sum_{B\in\beta}\set B
\end{equation}
where $\abs\beta$ designs the length of the sequence 
$\beta$. Kelley \cite{kelley} defined the intersection number 
of $\Bor$ as 
\begin{equation}
\label{I}
I(\Bor)
	=
\inf_{\beta\in\Seq[0]\Bor}
\sup_\omega
s(\beta)(\omega)
\end{equation}
Clearly, $0\le I(\Bor)\le1$; if $\Bor$ contains an infinite, 
disjoint collection of sets then necessarily $I(\Bor)=0$. 
If $\Bor\subset\A_+$, we introduce the family $\Sim(\Bor)$ 
of convex combinations of indicators of sets in $\Bor$. 
Clearly, 
$\{s(\beta):\beta\in\Seq[0]\Bor\}\subset\Sim(\Bor)$.
The closure of a set $A$ of real valued functions on
$\Omega$ with respect to the topology of uniform
distance will be denoted by $\cl[u]A$.

\begin{theorem}[Kelley, 59]
\label{th kelley}
An algebra $\A$ of sets admits a strictly positive, finitely
additive probability measure if and only if $\A$ may be 
written in the form
\begin{equation}
\label{kelley}
\A
	=
\{\emp\}
\cup
\bigcup_n\Bor_n
\qtext{with}
I(\Bor_n)>0
\qtext{for}
n=1,2,\ldots.
\end{equation}
\end{theorem}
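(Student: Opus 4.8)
The plan is to prove the two implications separately; the forward implication is routine, while the converse rests on a single minimax identity applied to each piece $\Bor_n$. For necessity, suppose $m\in\Prob(\A)$ is strictly positive and set $\Bor_n=\{A\in\A_+:m(A)\ge 1/n\}$. Since $m(A)>0$ for every $A\in\A_+$, these sets exhaust $\A_+$ and yield the decomposition \eqref{kelley}. To see that each intersection number is positive, I would observe that for any $\beta\in\Seq[0]\Bor_n$ the simple function $s(\beta)$ satisfies $\sup_\omega s(\beta)(\omega)\ge m\bigl(s(\beta)\bigr)=\frac1{\abs\beta}\sum_{B\in\beta}m(B)\ge 1/n$, because a probability never assigns more mass than the supremum of the integrand and every $B\in\beta$ has $m(B)\ge 1/n$. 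Taking the infimum over $\beta$ gives $I(\Bor_n)\ge 1/n>0$.

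For sufficiency the heart of the matter is the claim that whenever $I(\Bor)>0$ there exists $m\in\Prob(\A)$ with $m(B)\ge I(\Bor)$ for all $B\in\Bor$. I would obtain this from Sion's minimax theorem applied to the bilinear pairing $\phi(m,f)=m(f)$ on $\Prob(\A)\times\Sim(\Bor)$. Here $\Prob(\A)$, viewed in the weak-$*$ topology it inherits as a family of linear functionals on the bounded $\A$-measurable functions, is convex and compact, and $\phi(\cdot,f)$ is affine and weak-$*$ continuous; equipping $\Sim(\Bor)$ with the uniform topology makes $\phi(m,\cdot)$ affine and continuous as well, since $\abs{m(f)-m(g)}\le\sup_\omega\abs{f-g}$. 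These are exactly the hypotheses of the minimax theorem.

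The two sides of the minimax equality are then computed directly. For fixed $f\in\Sim(\Bor)$ the $\{0,1\}$-valued point evaluations $m_\omega(A)=\set A(\omega)$ lie in $\Prob(\A)$ and give $\sup_m m(f)=\sup_\omega f(\omega)$, so the one side equals $\inf_{f\in\Sim(\Bor)}\sup_\omega f(\omega)=I(\Bor)$, the last identity holding because the functions $s(\beta)$ realize all rational convex combinations and are uniformly dense in $\Sim(\Bor)$. For fixed $m$, minimizing the affine functional $f\mapsto m(f)$ over $\Sim(\Bor)$ is attained at a vertex, so $\inf_f m(f)=\inf_{B\in\Bor}m(B)$. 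The minimax theorem thus yields $\sup_m\inf_{B\in\Bor}m(B)=I(\Bor)$, and upper semicontinuity of $m\mapsto\inf_{B}m(B)$ on the compact set $\Prob(\A)$ ensures the supremum is attained by some $m$ with $m(B)\ge I(\Bor)$ for every $B\in\Bor$.

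To finish I would apply this claim to each $\Bor_n$, obtaining probabilities $m_n$ with $m_n(B)\ge I(\Bor_n)>0$ for $B\in\Bor_n$, and set $m=\sum_n 2^{-n}m_n$. A countable convex combination of finitely additive probabilities is again one, and any $A\in\A_+$ lies in some $\Bor_n$, whence $m(A)\ge 2^{-n}m_n(A)\ge 2^{-n}I(\Bor_n)>0$, so $m$ is strictly positive. I expect the main obstacle to be the careful justification of the minimax step: confirming that $\Prob(\A)$ is weak-$*$ compact and that the pairing meets the continuity and convexity hypotheses, together with the clean evaluation $\sup_m m(f)=\sup_\omega f(\omega)$. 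Everything else—the stratification in the necessity direction and the gluing by a weighted sum—is routine.
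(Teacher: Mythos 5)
Your proof is correct and follows essentially the same route as the paper: necessity via the level sets $\Bor_n$, sufficiency via Sion's minimax theorem applied to the pairing $(m,f)\mapsto m(f)$ on the weak$^*$-compact convex set $\Prob(\A)$ against $\Sim(\Bor)$, and the final gluing by $\sum_n 2^{-n}m_n$. The only cosmetic differences are that you spell out the necessity bound $I(\Bor_n)\ge 1/n$ and obtain an exact maximizer by upper semicontinuity, where the paper simply takes $m_n$ with $\inf_{B\in\Bor_n}m_n(B)>I(\Bor_n)/2$.
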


\begin{proof}
Necessity is obvious -- if $m\in\Prob(\A)$ is strictly
positive, take $\Bor_n=\{B\in\A:m(B)>1/n\}$. As for 
sufficiency, since each $f\in\Sim(\Bor)$ with values 
in $\Q$ belongs to 
$\{s(\beta):\beta\in\Seq[0]\Bor\}$,
then
$
\Sim(\Bor)
=
\cl[u]{\big\{s(\beta):\beta\in\Seq[0]\Bor\big\}}
$.
Moreover, each $f\in\Sim(\Bor)$ has finite range and 
every $A\in\A_+$ admits some $m\in\Prob(\A)$ with 
$m(A)=1$. Then, we deduce from \cite[Corollary 3.3]{sion}
\begin{align}
\label{sion}
I(\Bor)
&=
\inf_{\beta\in\Seq[0]\Sim}\sup_\omega s(\beta)(\omega)
=
\inf_{f\in\Sim(\Bor)}\sup_\omega f(\omega)
=
\inf_{f\in\Sim(\Bor)}\sup_{m\in\Prob(\A)}m(f)
=
\sup_{m\in\Prob(\A)}\inf_{f\in\Sim(\Bor)}m(f).
\end{align}
Under \eqref{kelley} each $n\in\N$ admits 
$m_n\in\Prob(\A)$ satisfying 
$\inf_{B\in\Bor_n}m_n(B)
>
I(\Bor_n)/2$. Then, $\sum_n2^{-n}m_n\in\Prob(\A)$ is 
strictly positive.
\end{proof}

Since each $\Bor$ with $I(\Bor)>0$ can contain at 
most finitely many, pairwise disjoint sets, it follows 
from \eqref{kelley} that a family of pairwise disjoint 
sets in $\A_+$ must be countable. This is the well 
known CC ({\it countable chain}) necessary condition 
formulated by Maharam and long conjectured to be 
sufficient until Gaifman \cite{gaifman} counterexample 
of a Boolean algebra possessing the CC property but 
lacking a strictly positive measure.

Aversa and Bhaskara Rao \cite{aversa_rao} make use 
of Tychonoff Theorem to prove Kelley's Theorem. This
is also important in our proof, although indirectly, via 
Sion's lemma.

\section{Some Related Results}

The relative advantage of our proof, apart from 
simplicity, is the great ease of generalization. 
Denote by $\La(\A)$ the vector space spanned by the
indicators of sets in $\A$ and for each $\rho:\La\to\R_+$, 
let
\begin{equation}
\label{N rho}
\Neg(\rho)
	=
\{A\in\A:\rho(\set A)=0\}.
\end{equation}
A set function $m\in ba(\A)_+$ such that 
$\Neg(m)\subset\Neg(\rho)$ 
is said to be strictly $\rho$-positive. If 
$\rho(\set A)\ge m(A)$ for all $A\in\A$ then $m$ is
said to be $\rho$-dominated.

\begin{theorem}
\label{th kelley pi}
Let $\pi$ be a monotone, sublinear functional on $\La(\A)$. 
There exists a $\pi$ dominated and strictly $\pi$ positive 
$m\in ba(\A)_+$ if and only if $\A$ may be written 
in the form
\begin{equation}
\label{kelley pi}
\A
	=
\Neg(\pi)
\cup\bigcup_n\Bor_n
\qtext{with}
I_\pi(\Bor_n)
	\equiv
\inf_{\beta\in\Seq[0]\Bor}\pi\big(s(\beta)\big)
	>
0
\qquad
n=1,2,\ldots
\end{equation}
Moreover, the set function $m$ may be chosen to be 
a probability if and only if $\pi(1)\ge1\ge-\pi(-1)$.
\end{theorem}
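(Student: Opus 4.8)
The plan is to mirror the proof of Theorem~\ref{th kelley} as closely as possible, replacing the role of $\Prob(\A)$ with the dual description of $\pi$-dominated set functions, and the quantity $\sup_\omega f(\omega)$ with $\pi(f)$. Since $\pi$ is monotone and sublinear, it is precisely a support functional, and the key structural fact I would want to invoke is a dual representation: for a monotone sublinear $\pi$ on $\La(\A)$, one has
\begin{equation}
\pi(f)=\sup_{m\in D_\pi}m(f),
\qquad
D_\pi=\{m\in ba(\A)_+:m\text{ is }\pi\text{-dominated}\},
\end{equation}
which is the standard Hahn--Banach / biduality statement (sublinear functionals are the upper envelopes of the dominated linear functionals, and monotonicity forces these to be nonnegative). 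I would first verify that $D_\pi$ is nonempty, convex, and weak$^*$ compact in $ba(\A)$, so that Sion's minimax theorem applies exactly as in \eqref{sion}.

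With that in place, the necessity direction should be essentially immediate: given a $\pi$-dominated, strictly $\pi$-positive $m$, I would set $\Bor_n=\{B\in\A:m(B)>1/n\}$ and check that $I_\pi(\Bor_n)>0$, using $\pi(s(\beta))\ge m(s(\beta))\ge\inf_{B\in\Bor_n}m(B)>1/n$ (here domination is not even needed, only $m(B)\le\pi(\set B)$ trivially from the definition, and strict $\pi$-positivity guarantees $\Neg(\pi)$ absorbs exactly the $m$-null sets so that $\A\setminus\Neg(\pi)$ is covered). For sufficiency I would again use that every $f\in\Sim(\Bor)$ with rational values arises as some $s(\beta)$, so by uniform density and the minimax equality,
\begin{equation}
I_\pi(\Bor)
=
\inf_{f\in\Sim(\Bor)}\pi(f)
=
\inf_{f\in\Sim(\Bor)}\sup_{m\in D_\pi}m(f)
=
\sup_{m\in D_\pi}\inf_{f\in\Sim(\Bor)}m(f).
\end{equation}
Thus whenever $I_\pi(\Bor_n)>0$ there is $m_n\in D_\pi$ with $\inf_{B\in\Bor_n}m_n(B)>I_\pi(\Bor_n)/2$, and I would form $m=\sum_n 2^{-n}m_n$, which is $\pi$-dominated (domination is preserved under the convex-type combination since each $m_n(A)\le\pi(\set A)$ and $\sum_n 2^{-n}=1$) and satisfies $m(B)>0$ for every $B\notin\Neg(\pi)$, hence is strictly $\pi$-positive.

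The final normalization clause I would handle separately. The condition $\pi(1)\ge 1\ge-\pi(-1)$ is exactly the requirement that the constant function $1$ can receive mass $1$ from some element of $D_\pi$: since $m(1)\le\pi(1)$ and $-m(1)=m(-1)\le\pi(-1)$ give $-\pi(-1)\le m(1)\le\pi(1)$, the value $m(1)=1$ is attainable precisely when $1$ lies in this interval. I would argue that one may then select the $m_n$ (and rescale the combination) so that the resulting strictly $\pi$-positive $m$ additionally has $m(\Omega)=1$, making it a probability.

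The main obstacle I anticipate is the careful verification that $D_\pi$ is weak$^*$ compact and that the minimax hypotheses of \cite[Corollary 3.3]{sion} are genuinely met in this more general setting — in the original theorem compactness came for free from $\Prob(\A)$, whereas here I must extract it from the monotone sublinear structure of $\pi$, and I must ensure the dual representation $\pi=\sup_{D_\pi}m(\cdot)$ holds pointwise on all of $\La(\A)$ rather than merely on the simplex $\Sim(\Bor)$. Reconciling the normalization constraint with strict positivity in the probability case is the one place where a little extra bookkeeping beyond the plain Kelley argument will be required.
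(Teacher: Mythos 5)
Your treatment of the main equivalence follows the paper's route essentially verbatim: identify the convex, weak$^*$ compact set of $\pi$-dominated elements of $ba(\A)_+$, obtain the dual representation $\pi(f)=\sup_{m\in D_\pi}m(f)$ from Hahn--Banach, and rerun the minimax argument of Theorem \ref{th kelley} with $\pi$ in place of $\sup_\omega$. That part is sound, with one small caution: in the necessity step you use $\pi(s(\beta))\ge m(s(\beta))$, which requires domination on all of $\Sim(\Bor_n)$ and not merely $m(B)\le\pi(\set B)$ on indicators (sublinearity gives $\pi(s(\beta))\le\frac1{|\beta|}\sum_{B\in\beta}\pi(\set B)$, i.e.\ the wrong direction), so the parenthetical claim that ``domination is not even needed'' should be dropped and $D_\pi$ taken to be the set $ba(\A,\pi)_+$ of functionals dominated on the whole of $\La(\A)$, exactly as in the paper.

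The genuine gap is in the final clause. You propose to ``select the $m_n$ and rescale the combination'' so that the strictly $\pi$-positive, $\pi$-dominated $m$ also satisfies $m(\Omega)=1$, but neither operation works as stated: multiplying a $\pi$-dominated $m$ by a constant larger than $1$ can destroy domination, and a convex mixture $\lambda m+(1-\lambda)m'$ with $m'\in D_\pi$ can have total mass $1$ with $\lambda>0$ only if $D_\pi$ contains elements of mass on the far side of $1$ from $m(\Omega)$, which the hypothesis $\pi(1)\ge1\ge-\pi(-1)$ does not guarantee. For instance, with $\pi(f)=\sup_\omega f(\omega)^+$ one has $\pi(1)=1$ and $-\pi(-1)=0$, $D_\pi$ is the set of subprobabilities, and a strictly positive $m$ of mass $1/2$ cannot be mixed up to mass one without annihilating its weight. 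The paper's device is different and is precisely what closes this hole: replace $\pi$ by its translation-invariant envelope $\hat\pi(f)=\inf_{a\in\R}\big(\pi(a+f)-a\big)$, which under $\pi(1)\ge1\ge-\pi(-1)$ is again monotone and sublinear, satisfies $\hat\pi\le\pi$ and $\hat\pi(1)=1=-\hat\pi(-1)$, so that \emph{every} $\hat\pi$-dominated element of $ba(\A)_+$ automatically has total mass exactly one; the construction is then run with $\hat\pi$ in place of $\pi$. Without this (or an equivalent) regularization your argument delivers the strictly positive dominated $m$ but not the probability normalization.
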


\begin{proof}
The proof of Theorem \ref{th kelley} remains true 
after replacing $I$ with $I_\pi$ provided we can 
show that
\begin{equation}
\label{attain}
\pi(f)
	=
\sup_{m\in ba(\A,\pi)_+}\int fdm
\qquad
f\in\La(\A)
\end{equation}
and that set
\begin{equation}
ba(\A,\pi)_+
	=
\Big\{m\in ba(\A)_+:
\pi(h)\ge\int hdm\text{ for all }h\in\La(\A)\Big\}
\end{equation}
is convex and weak$^*$ compact. Both claims are, 
however, obvious: the former follows from Hahn 
Banach Theorem and the representation of linear 
functionals on $\La(\A)$ (see \cite[Chapter 3]{rao} 
and ultimately \cite{horn_tarski}); the latter from 
Tychonoff Theorem. If $m(\Omega)=1$ then 
necessarily $\pi(1)\ge1\ge-\pi(-1)$; conversely, if 
$\pi(1)\ge1\ge-\pi(-1)$ then, by well known arguments,
the functional on $\La(\A)$ defined by letting 
$\hat\pi(f)
	=
\inf_{a\in\R}\pi(a+f)-a$ is monotone, sublinear and
additive with respect to constants so that
$\hat\pi(1)=1=-\hat\pi(-1)$. Clearly, $\pi\ge\hat\pi$.
If $\hat m\in ba(\A)_+$ is $\hat\pi$-dominated it is 
then a probability.
\end{proof}

As in Theorem \ref{th kelley}, the decomposition
\eqref{kelley pi}, although necessary and sufficient,
is not very handy to use. An easier condition is obtained
by imposing a constraint on the degree of non linearity 
of $\pi$.

\begin{lemma}
\label{lemma kelley pi}
Let $\pi$ be a monotone, sublinear functional on
$\La(\A)$ satisfying the property
\begin{equation}
\label{m}
\mathfrak m(\pi)
	\equiv
\sup
\frac{\sum_{i=1}^Na_i\pi(f_i)-\pi\big(\sum_{i=1}^Na_if_i\big)}
{\pi\big(\sum_{i=1}^Na_if_i\big)}
	<
\infty
\end{equation}
the supremum being over all convex combinations of
elements of $\La(\A)_+$ such that 
$\pi\big(\sum_{i=1}^Na_if_i\big)
	>
0$.
Then there exists a strictly $\pi$-positive $m\in ba(\A)_+$
which is $\pi$-dominated.
\end{lemma}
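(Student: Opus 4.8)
The plan is to derive the existence of $m$ directly from the already established Theorem \ref{th kelley pi}: it suffices to produce a decomposition of $\A$ of the form \eqref{kelley pi}. The natural candidate stratifies $\A\setminus\Neg(\pi)$ by the size of $\pi(\set A)$, in exact analogy with the necessity construction in Theorem \ref{th kelley}. Concretely, I would set
\[
\Bor_n=\{A\in\A:\pi(\set A)>1/n\},
\]
so that $\A=\Neg(\pi)\cup\bigcup_n\Bor_n$, since every $A$ with $\pi(\set A)>0$ lands in some $\Bor_n$. The whole burden then reduces to showing that the finiteness of $\mathfrak m(\pi)$ forces $I_\pi(\Bor_n)>0$ for each $n$.

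For the key estimate I would fix $n$ and an arbitrary $\beta=(B_1,\dots,B_N)\in\Seq[0]{\Bor_n}$ and note that $s(\beta)=\frac1N\sum_{j=1}^N\set{B_j}$ is a convex combination of indicators, all drawn from $\La(\A)_+$. Inserting this particular convex combination into the quotient defining $\mathfrak m(\pi)$ in \eqref{m} and clearing the denominator gives
\[
\sum_{j=1}^N\tfrac1N\,\pi(\set{B_j})\le\big(1+\mathfrak m(\pi)\big)\,\pi\big(s(\beta)\big).
\]
Each $B_j\in\Bor_n$ satisfies $\pi(\set{B_j})>1/n$, so the left-hand side exceeds $1/n$, whence $\pi\big(s(\beta)\big)>\tfrac{1}{(1+\mathfrak m(\pi))\,n}$. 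This bound is uniform over $\beta$, so taking the infimum over $\Seq[0]{\Bor_n}$ yields $I_\pi(\Bor_n)\ge\tfrac{1}{(1+\mathfrak m(\pi))\,n}>0$, and Theorem \ref{th kelley pi} then supplies the strictly $\pi$-positive, $\pi$-dominated set function $m$.

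The one point that genuinely requires care, and which I expect to be the main obstacle, is that the inequality above is only licensed when $\pi\big(s(\beta)\big)>0$, because the supremum in \eqref{m} ranges solely over convex combinations with strictly positive $\pi$-value; a priori one must exclude the possibility $\pi\big(s(\beta)\big)=0$. I would settle this by monotonicity: since $s(\beta)\ge\frac1N\set{B_1}$ pointwise, monotonicity together with positive homogeneity of $\pi$ gives $\pi\big(s(\beta)\big)\ge\frac1N\pi(\set{B_1})>\tfrac{1}{Nn}>0$, so every such $s(\beta)$ truly lies in the range of the supremum defining $\mathfrak m(\pi)$ and the estimate applies. Note that this crude monotonicity bound alone collapses to $0$ as $N\to\infty$, which is precisely why the uniform control furnished by $\mathfrak m(\pi)<\infty$ is indispensable for concluding $I_\pi(\Bor_n)>0$.
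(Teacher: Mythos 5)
Your proposal is correct and follows essentially the same route as the paper: the same choice $\Bor_n=\{A\in\A:\pi(\set A)>1/n\}$, the same preliminary monotonicity bound $\pi(s(\beta))\ge\frac1N\pi(\set{B_1})>0$ to legitimize applying \eqref{m}, and the same resulting estimate $\pi(s(\beta))\ge\frac{1/n}{1+\mathfrak m(\pi)}$ giving $I_\pi(\Bor_n)>0$. Nothing to add.
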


\begin{proof}
Let $\{A_1,\ldots,A_N\}\in\Seq[0]{\Bor_n}$ with
$\Bor_n
=
\{A\in\A:\pi(\set A)>1/n\}$. Then,
$
\pi\Big(\frac1N\sum_{i=1}^N\set{A_i}\Big)
\ge
\frac1N\pi(\set{A_1})
>0
$
and, by the assumption,
\begin{align*}
\pi\Big(\frac1N\sum_{i=1}^N\set{A_i}\Big)
\ge
\frac{1}{1+\mathfrak m(\pi)}\frac1N\sum_{i=1}^N\pi(\set{A_i})
\ge
\frac{1/n}{1+\mathfrak m(\pi)}.
\end{align*}
Thus, the decomposition \eqref{kelley pi} holds.
\end{proof}

Considering the role played in \eqref{kelley pi} by the
collection $\Neg(\pi)$, one may invert the perspective
adopted in Theorem \ref{th kelley pi} and raise the 
question whether a pre assigned family of sets 
$\Neg\subset\A$ coincides with the collection of 
null sets of some $m\in\Prob(\A)$, i.e. with the set
$\Neg(m)=\{A\in\A:m(A)=0\}$. Define
$\Prob(\A,\Neg)
	=
\{m\in\Prob(\A):\Neg\subset\Neg(m)\}$.
We can modify definition 
\eqref{I} into the following: 
\begin{equation}
\label{IN}
I_\Neg(\Bor)
	=
\inf_{\beta\in\Seq[0]\Bor}
\inf_{N\in\Neg}
\sup_{\omega\in N^c}s(\beta)(\omega)
\qquad
\Bor\subset\A_+.
\end{equation}

The proof of the following Corollary may be given in 
terms of quotient algebras, as clearly remarked by Gaifman
\cite[p. 61]{gaifman}, but ours is much simpler%
\footnote{
A different proof of the following Corollary appears in
\cite[Corollary 5]{Projection_2019}.
}. 
An ideal of sets is of course a collection closed with
respect to union and to subsets.

\begin{corollary}
\label{cor kelley}
Let $\Neg\subset\A$. Then, $\Neg=\Neg(m)$ for some 
$m\in\Prob(\A)$ if and only if $\Neg$ is a proper ideal 
(of sets) and if $\A$ admits the representation 
\begin{equation}
\label{kelley N}
\A
	=
\Neg\cup\bigcup_n\Bor_n
\qtext{with}
I_\Neg(\Bor_n)>0
\qtext{for}
n=1,2,\ldots.
\end{equation}
\end{corollary}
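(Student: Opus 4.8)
The plan is to deduce the Corollary from Theorem \ref{th kelley pi} by feeding it the right functional, rather than repeating the minimax argument from scratch. Given the family $\Neg$, I would set
\[
\pi(f)=\inf_{N\in\Neg}\sup_{\omega\in N^c}f(\omega),\qquad f\in\La(\A).
\]
First I would check that this is a well-defined monotone, sublinear functional on $\La(\A)$. Finiteness uses that $\Neg$ is \emph{proper} (so each $N^c\ne\emp$ and the inner $\sup$ stays above $\inf_\omega f(\omega)$) together with $\emp\in\Neg$ (so $\pi(f)\le\sup_\omega f(\omega)<\infty$); monotonicity is immediate; sublinearity follows because $\Neg$, being closed under finite unions, is upward directed, so passing to $N_1\cup N_2$ turns $\sup_{N^c}(f+g)\le\sup_{N^c}f+\sup_{N^c}g$ into $\pi(f+g)\le\pi(f)+\pi(g)$. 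By construction the modified intersection number of \eqref{IN} is exactly the one attached to this $\pi$ in \eqref{kelley pi}: $I_\Neg(\Bor)=\inf_{\beta}\pi\big(s(\beta)\big)=I_\pi(\Bor)$.

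The decisive identification is $\Neg(\pi)=\Neg$. Since $\set A$ is $\{0,1\}$-valued, $\pi(\set A)=0$ holds iff $A\subset N$ for some $N\in\Neg$, and because $\Neg$ is closed under subsets this is the same as $A\in\Neg$; conversely, for $A\notin\Neg$ properness forces $A\cap N^c\ne\emp$ for every $N\in\Neg$, so $\sup_{\omega\in N^c}\set A(\omega)=1$ and hence $\pi(\set A)=1$. Thus $\pi(\set A)\in\{0,1\}$, vanishing precisely on $\Neg$. Moreover $N^c\ne\emp$ gives $\pi(1)=1$ and $-\pi(-1)=1$, so the constant condition $\pi(1)\ge1\ge-\pi(-1)$ of Theorem \ref{th kelley pi} is satisfied and the dominating set function may be taken to be a probability.

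With these facts the two implications drop out of Theorem \ref{th kelley pi}. For sufficiency, assuming $\Neg$ is a proper ideal and that \eqref{kelley N} holds -- which, read through the equalities $\Neg(\pi)=\Neg$ and $I_\pi=I_\Neg$, is precisely \eqref{kelley pi} -- Theorem \ref{th kelley pi} yields a $\pi$-dominated, strictly $\pi$-positive $m\in\Prob(\A)$. Strict $\pi$-positivity gives $\Neg(m)\subset\Neg(\pi)=\Neg$, while $\pi$-domination gives $m(N)\le\pi(\set N)=0$ for every $N\in\Neg$, i.e.\ $\Neg\subset\Neg(m)$; hence $\Neg=\Neg(m)$. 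For necessity, if $\Neg=\Neg(m)$ then $\Neg$ is a proper ideal (the null sets of a probability are closed under subsets and finite unions and omit $\Omega$), and this $m$ is $\pi$-dominated and strictly $\pi$-positive, since $\pi(\set A)=1\ge m(A)$ for $A\notin\Neg$ and $\pi(\set A)=0=m(A)$ for $A\in\Neg$; the ``only if'' part of Theorem \ref{th kelley pi} then delivers the representation \eqref{kelley N}.

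I expect the only real work to be the verification that $\Neg(\pi)=\Neg$ and the bookkeeping translating ``$\pi$-dominated and strictly $\pi$-positive'' into the exact equality $\Neg(m)=\Neg$; both lean on $\Neg$ being a \emph{proper ideal} -- properness to keep $\pi$ finite and to force $\pi(\set A)=1$ off $\Neg$, and closure under subsets to obtain $\Neg(\pi)\subset\Neg$. Checking monotonicity and sublinearity of $\pi$ and matching the two intersection numbers is routine once $\pi$ is written down, so no separate minimax argument is needed; alternatively one could mirror the proof of Theorem \ref{th kelley} verbatim with $I$ replaced by $I_\Neg$ and $\Prob(\A)$ by $\Prob(\A,\Neg)$, but reducing to Theorem \ref{th kelley pi} is cleaner.
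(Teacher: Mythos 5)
Your proposal is correct and follows essentially the same route as the paper: the paper also defines $\pi_\Neg(f)=\inf_{N\in\Neg}\sup_{\omega\in N^c}f(\omega)$, observes that it is monotone, positively homogeneous, subadditive (because $\Neg$ is an ideal) and satisfies $\pi_\Neg(1)\ge1\ge-\pi_\Neg(-1)$ (because $\Neg$ is proper), and then invokes Theorem \ref{th kelley pi}. Your additional bookkeeping ($\Neg(\pi)=\Neg$, $I_\pi=I_\Neg$, and the translation of $\pi$-domination plus strict $\pi$-positivity into $\Neg(m)=\Neg$) just makes explicit what the paper leaves implicit.
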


\begin{proof}

The functional defined on $\La(\A)$ by letting 
$\pi_\Neg(f)
=
\inf_{N\in\Neg}\sup_{\omega\in N^c}f(\omega)$
is monotone and positively homogeneous by 
definition and subadditive because $\Neg$ is
an ideal. Moreover, given that $\Neg$ is proper,
$\pi_\Neg(1)\ge1\ge-\pi_\Neg(-1)$. The claim
follows from Theorem \ref{th kelley pi}.
\end{proof}

If $\pi$ is as in Theorem \ref{th kelley pi} and 
$\pi(1)>0$, then $\Neg(\pi)$ is a proper ideal 
and Corollary \ref{cor kelley} may be used to 
determine the existence of a strictly $\pi$ 
positive probability, not necessarily $\pi$-dominated.

Fix $\Meas\subset\Prob(\A)$. By choosing
$\Neg=\bigcap_{m\in\Meas}\Neg(m)$,
Corollary \ref{cor kelley} provides an answer to the
question of whether a given subfamily of $\Prob(\A)$ 
is weakly dominated. The notion of weak domination 
appears in \cite[p. 159]{rao} under the name of weak 
absolute continuity. The corresponding question of 
whether a given set $\Meas$ is dominated -- i.e. each
$m\in\Meas$ is absolutely continuous with respect
to a fixed $m_0$ -- has recently been characterized in 
\cite{JMAA_2019}.
On examining the proof of Theorem \ref{th kelley pi},
the only properties of $\Prob(\A,\Neg)$ that are used 
are convexity, weak$^*$ compactness and \eqref{attain}
which translates into
\begin{equation}
\label{norming}
m(f)
	=
\inf_{N\in\Neg}\sup_{\omega\in N^c}f(\omega).
\end{equation} 
In case $\Neg$ is the ideal of null sets of a given family 
$\Meas\subset\Prob(\A)$ these same properties are 
also true of the set
\begin{equation}
\Meas^*
	=
\cco[*]
{\{m_A:m\in\Meas,\ A\in\A,\ m(A)>0\}}
\end{equation}
where $m_A\in\Prob(\A)$ is defined as the restriction%
\footnote{
That is $m_A(B)=m(A\cap B)/m(A)$ for each $B\in\A$.
} 
of $m$ to $A$
and $\cco[*]{}$ denotes the weak$^*$-closed convex 
hull. Thus the probability that weakly dominates $\Meas$, 
if it exists, can be taken to be an element of $\Meas^*$. 
This remark delivers a version of a well known result
of Halmos and Savage \cite[Lemma 7]{halmos_savage}:

\begin{corollary}[Halmos and Savage, 49]
\label{cor hs}
If $\Meas\subset\Prob(\A)$ is weakly dominated it then
admits a weakly dominating subset which is countable.
\end{corollary}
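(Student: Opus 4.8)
The plan is to reduce Corollary~\ref{cor hs} to an application of Corollary~\ref{cor kelley} together with the countable decomposition that the latter forces. First I would form the ideal of common null sets, $\Neg=\bigcap_{m\in\Meas}\Neg(m)$, and observe that the hypothesis that $\Meas$ is weakly dominated means precisely that there exists some $m_0\in\Prob(\A)$ with $\Neg(m_0)\subset\bigcup_{m\in\Meas}\Neg(m)$ in the weak sense; equivalently, $\Neg=\Neg(m_0)$ for a suitable $m_0$. By Corollary~\ref{cor kelley} this is the same as saying that $\Neg$ is a proper ideal and that $\A$ admits the representation \eqref{kelley N}, namely $\A=\Neg\cup\bigcup_n\Bor_n$ with $I_\Neg(\Bor_n)>0$ for every $n$. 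The countability that we are after will come from this countable union of the $\Bor_n$.

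Next I would use the observation made in the surrounding text that, when $\Neg$ is the ideal of null sets of $\Meas$, the set $\Meas^*$ enjoys the same three properties (convexity, weak$^*$ compactness, and the norming identity \eqref{norming}) that were the only features of $\Prob(\A,\Neg)$ used in the proof of Theorem~\ref{th kelley pi}. Consequently the weakly dominating probability can be taken inside $\Meas^*$, i.e. as a weak$^*$-limit of convex combinations of the restricted measures $m_A$ with $m\in\Meas$. The key step is then to extract, from each block $\Bor_n$ with $I_\Neg(\Bor_n)>0$, a single measure $m_n$ (built as in the sufficiency argument of Theorem~\ref{th kelley}, so that $\inf_{B\in\Bor_n}m_n(B)>I_\Neg(\Bor_n)/2$) that is supported on finitely many of the generating data $(m,A)$. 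Each such $m_n$ involves only countably many — in fact finitely many — elements of $\Meas$, and forming $\sum_n 2^{-n}m_n$ yields a single dominating probability that depends on only countably many members of $\Meas$.

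Finally I would assemble the countable subfamily. Collecting, over all $n$, the (finitely many) measures $m\in\Meas$ that appear in the representation of each $m_n$ produces a countable subset $\Meas_0\subset\Meas$, and by construction the null ideal of $\Meas_0$ already contains $\Neg$; hence $\Meas_0$ is weakly dominating and countable. The main obstacle I anticipate is the passage from the weak$^*$ membership in $\Meas^*$ to an \emph{honest countable} selection: a priori an element of $\cco[*]{}$ is only a limit of convex combinations, so one must verify that the finitely many atoms used at each stage $n$ can indeed be chosen from the generating set $\{m_A\}$ rather than from its closure. This is handled by noting that the strict inequality $\inf_{B\in\Bor_n}m_n(B)>I_\Neg(\Bor_n)/2$ is an open condition, so it survives passing from a limiting measure to a sufficiently close finite convex combination of genuine $m_A$'s, which is what licenses the countable bookkeeping.
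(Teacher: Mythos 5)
Your overall route is the one the paper intends: the paper gives no separate proof of Corollary \ref{cor hs}, deriving it from the preceding remark that the weakly dominating probability may be taken inside $\Meas^*$, and you correctly combine this with the decomposition \eqref{kelley N} and the block-by-block construction $\sum_n2^{-n}m_n$. You also correctly locate the crux: passing from an element of the weak$^*$-closed convex hull $\Meas^*$ to an honest finite convex combination of the generators $m_A$, which is what makes the bookkeeping countable. (A small logical slip first: at the end you verify that the null ideal of $\Meas_0$ \emph{contains} $\Neg$, which is the trivial inclusion; weak domination by $\Meas_0$ requires the reverse one, $\bigcap_{m\in\Meas_0}\Neg(m)\subset\Neg$. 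That reverse inclusion would indeed follow from your construction -- any $A\notin\Neg$ lies in some $\Bor_n$, and $m_n(A)>0$ forces $m(A)>0$ for one of the finitely many $m\in\Meas$ entering $m_n$ -- but only if the extraction step is valid.)

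The extraction step, as you justify it, is not valid. The map $\mu\mapsto\inf_{B\in\Bor_n}\mu(B)$ is an infimum of weak$^*$-continuous functions over the in general uncountable family $\Bor_n$, hence only upper semicontinuous; the set $\{\mu:\inf_{B\in\Bor_n}\mu(B)>I_\Neg(\Bor_n)/2\}$ is therefore \emph{not} weak$^*$-open, and the strict inequality need not survive passage to a nearby finite convex combination. Concretely, a basic weak$^*$-neighbourhood of $\mu_n$ constrains $\mu(B)$ for only finitely many sets $B$, so a finite convex combination of $m_A$'s lying in that neighbourhood may assign arbitrarily small mass to the remaining members of $\Bor_n$; nothing rules out that the supremum of $\inf_{B\in\Bor_n}\mu(B)$ over the (non-closed) convex hull of the $m_A$'s is $0$ even though its supremum over $\Meas^*$ equals $I_\Neg(\Bor_n)>0$, since an upper semicontinuous concave function can be strictly larger on the closure of a convex set than on the set itself. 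So the countable selection needs a different argument -- for instance a further decomposition of each $\Bor_n$ keeping track of which generators charge its members, or an exhaustion argument in the spirit of the original Halmos--Savage proof -- and as written your proof has a genuine gap at exactly the point you flagged.
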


\section{A.s. Rankings}
Following the intuitions of de Finetti \cite{definetti},
probability should be deduced endogenously from
some decision problem. In this final section we 
investigate whether an {\it a priori} given partial 
order $\ge_*$ defined%
\footnote{
The symbol $\ge$ will be be reserved for
pointwise order. 
} 
for all real-valued functions defined on $\Omega$ 
admits the representation as a probabilistic ranking 
such as
\begin{equation}
\label{rep}
f\ge_*g
\qqtext{if and only if}
f\ge g 
\quad 
m
\text{ almost surely}
\end{equation}
for some reference probability $m$. Given our interest
for finite additivity, justified by the preceding results the 
exact meaning of the expression {\it almost surely}
requires some care. We shall use the expression
$f\ge g$, $m$-a.s. as short for the condition
\begin{equation}
\label{a.s.}
\inf_{t>0}m(f-g<-t)=0.
\end{equation}
When the partial order $\ge_*$ satisfies \eqref{rep}
in the above defined sense, we shall say that $\ge_*$
admits a probabilistic representation, or, if $m$ is known, 
that $\ge_*$ is represented by $m$. We observe that
if $\ge_*$ indeed admits a probabilistic representation 
then it will surely satisfy, among other properties, the
following ones:
\begin{enumerate}[(i).]
\item\label{1>0}
$0\not>_*1$;
\item\label{bounded}
$f\ge_*0$ and $a>0$ imply $f\wedge a\ge_*0$;
\item\label{deterministic}
$f\ge0$ implies $f\ge_*0$;
\item\label{convex}
if $f\ge_*g$ then
$bf+h\ge_*bg+h$ 
for all $b,h:\Omega\to\R$
with $b$ positive and bounded;
\item\label{robust}
if $f+\varepsilon\ge_*0$ for all $\varepsilon>0$ then 
$f\ge_*0$.
\end{enumerate}

If $\ge_*$ is a given partial order, denote by $[f]_*$
the corresponding equivalence class of $f$ 
and write $\Neg_*=\{A\subset\Omega:0\ge_*A\}$.

\begin{theorem}
\label{th kelley order}
A partial order $\ge_*$ defined on $\R^\Omega$
has a probabilistic representation if and only if 
(a)
it satisfies \iref{1>0}--\iref{robust} 
and 
(b)
the following decomposition holds:
\begin{equation}
\label{kelley *}
2^\Omega
	=
\Neg_*\cup\bigcup_{n\in\N}\Bor_n
\qtext{where}
I_*(\Bor_n)
\equiv
\inf_{f\in\Seq[0]{\Bor_n}}\inf_{g\in[f]_*}\sup_{\omega}g(\omega)>0
\quad
n\in\N.
\end{equation}
\end{theorem}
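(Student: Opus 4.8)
The plan is to follow the same architecture as the proof of Theorem \ref{th kelley pi} and Corollary \ref{cor kelley}, adapting the machinery to the order-theoretic setting. The necessity direction should be routine: if $\ge_*$ is represented by some $m\in\Prob(\A)$, then properties \iref{1>0}--\iref{robust} follow directly from the meaning of \eqref{a.s.}, and the decomposition \eqref{kelley *} is obtained exactly as in Theorem \ref{th kelley} by setting $\Bor_n=\{A:m(A)>1/n\}$, with $\Neg_*$ coinciding with the null ideal $\Neg(m)$. The content is in sufficiency, and here the first step is to manufacture a sublinear functional out of the order. I would define
\begin{equation}
\pi_*(f)=\inf\{a\in\R:a\ge_* f\}
\end{equation}
(or the equivalent $\inf_{g\in[f]_*}\sup_\omega g$ appearing in \eqref{kelley *}) and verify that properties \iref{convex} and \iref{deterministic} make $\pi_*$ monotone and that the positive-homogeneity and subadditivity needed for sublinearity descend from the structural axioms, with \iref{bounded} controlling truncations and \iref{robust} guaranteeing the closure property that makes $\pi_*$ well behaved under the $\inf$ over $\varepsilon>0$.

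Once $\pi_*$ is in hand, the strategy is to recognize that $I_*$ is precisely $I_{\pi_*}$ and that $\Neg_*=\Neg(\pi_*)$, so that the decomposition hypothesis \eqref{kelley *} is literally \eqref{kelley pi} for the functional $\pi_*$. Then Theorem \ref{th kelley pi}, together with the normalization clause governed by $\pi_*(1)\ge1\ge-\pi_*(-1)$ (which \iref{1>0} and \iref{deterministic} should supply), yields a $\pi_*$-dominated, strictly $\pi_*$-positive probability $m\in\Prob(\A)$. The remaining and genuinely delicate task is to show that this $m$ actually \emph{represents} $\ge_*$ in the sense of \eqref{rep}, i.e. that $f\ge_* g\iff f\ge g$ $m$-a.s. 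One inclusion amounts to checking that $m$-domination plus strict $m$-positivity forces $f\ge_* 0\Rightarrow \inf_{t>0}m(f<-t)=0$; the converse inclusion, recovering the order from the measure, is where the axioms must be used most carefully.

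The hard part will be this last equivalence, and in particular the direction that reconstructs $\ge_*$ from $m$: a priori $\pi_*$ only encodes the order through its null ideal and its sublinear envelope, and one must rule out that distinct orders collapse to the same $m$. I expect the argument to hinge on \iref{convex} (translation and scaling invariance, which lets one reduce comparisons $f\ge_* g$ to statements $f-g\ge_* 0$ about a single function) together with \iref{robust} and \iref{bounded}, which respectively supply the passage to the limit over $\varepsilon>0$ in \eqref{a.s.} and the reduction to bounded functions where the integral against $m$ is unambiguous. Concretely, I would first establish the representation on $\La(\A)$ for bounded $f$, where $m(f)$ is defined and $\pi_*(f)=\sup_{m'}m'(f)$ from \eqref{attain} can be compared to the single dominating $m$, and then extend to general $f\in\R^\Omega$ by the truncation axiom \iref{bounded} and the robustness axiom \iref{robust}, taking monotone limits of $f\wedge a$ as $a\to\infty$. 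The technical crux is verifying that strict $\pi_*$-positivity is exactly strong enough to make the null ideal of $m$ agree with $\Neg_*$ on \emph{all} subsets of $\Omega$, not merely on $\A$, so that the almost-sure comparison \eqref{a.s.} recovers $\ge_*$ precisely.
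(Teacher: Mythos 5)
Your architecture matches the paper's: both directions reduce the order-theoretic statement to the earlier Kelley-type results by manufacturing a sublinear functional from $\ge_*$ (the paper routes through Corollary \ref{cor kelley} applied to the ideal $\Neg_*$ and the functional $\pi_{\Neg_*}(f)=\inf_{N\in\Neg_*}\sup_{\omega\in N^c}f(\omega)$; your $\pi_*(f)=\inf_{g\in[f]_*}\sup_\omega g(\omega)$ is the same object once one knows the two infima agree). But the proposal leaves a genuine gap at exactly that point: the identification $I_*=I_{\pi_*}$ (equivalently $I_*=I_{\Neg_*}$) is asserted, not proved. The infimum in \eqref{kelley *} runs over the whole equivalence class $[f]_*$ inside $\R^\Omega$, and relating it to an infimum over complements of null sets is the content of the paper's display \eqref{eq}, which in turn rests on three preliminary facts you never establish: that $f\ge_*0$ and $t>0$ force $\{f<-t\}\in\Neg_*$ (via \iref{deterministic} and \iref{convex}, from $0\ge_*-f\sset{f<-t}\ge_*t\sset{f<-t}$); that $g\in[f]_*$ forces $\{\abs{f-g}>\eta\}\in\Neg_*$; and that $N\in\Neg_*$ gives $f\set{N^c}\in[f]_*$. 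Without these, the hypothesis \eqref{kelley *} cannot be fed into Theorem \ref{th kelley pi} or Corollary \ref{cor kelley} at all, and likewise $\Neg_*=\Neg(\pi_*)$ and the subadditivity of $\pi_*$ remain unverified claims.

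The second gap is in the representation step, where your plan points in the wrong direction. Comparing $m(f)$ with $\pi_*(f)=\sup_{m'}m'(f)$ on bounded functions says nothing about \eqref{rep}, which is a statement about null sets rather than integrals: what must be shown is that $f\ge_*0$ if and only if $\{f<-t\}\in\Neg(m)=\Neg_*$ for every $t>0$. The forward implication is the lemma above; the converse is where \iref{bounded}, \iref{convex} and \iref{robust} are actually spent -- from $\{f<-t\}\in\Neg_*$ one passes to $(f\vee-c)\sset{f<-t}\ge_*0$, then to $f\sset{f<-t}\ge_*0$, then to $f+\varepsilon\ge_*(f+\varepsilon)\sset{f\ge-\varepsilon}\ge_*0$ for all $\varepsilon>0$, and finally to $f\ge_*0$. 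Your proposed monotone limits of $f\wedge a$ and the worry about extending the null ideal from $\A$ to $2^\Omega$ are red herrings (the algebra here is already $2^\Omega$); the real crux is recovering the order on arbitrary functions from its trace on indicators, and that chain of implications is missing from your argument. A minor further omission: the normalization clause of Theorem \ref{th kelley pi} needs $\pi_*(1)\ge1\ge-\pi_*(-1)$, which requires an explicit appeal to \iref{1>0}.
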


\begin{proof}
Assume \tiref a. By \iref{deterministic} and \iref{convex} 
if $f\ge_*0$ and $t>0$ we have
$
0\ge_*
-f\sset{f<-t}
\ge_*
t\sset{f<-t}
$
and so $\{f<-t\}\in\Neg_*$. Then, $g\in[f]_*$ and $\eta>0$ 
imply
$N_{\eta,g}\equiv\{\abs{f-g}>\eta\}
	\in
\Neg_*$
while
$N\in\Neg_*$ implies $f\set{N^c}\in[f]_*$. Choose $h\in[f]_*$ 
and, since $\Neg_*$ is $\cup$-closed, 
$N_{\eta,h}
	\subset 
N
	\in
\Neg_*$. 
We get
\begin{align}
\label{eq}
\inf_{g\in[f]_*}\sup_{\omega}g(\omega)
	\le
\sup_{\omega\in N^c}f(\omega)
	\le
\eta
+
\sup_{\omega}h(\omega).
\end{align}
In other words, \tiref a implies that
$I_*
=
I_{\Neg_*}
$
(see \eqref{IN}) and, by \iref{1>0} and \iref{convex}, that
$\Neg_*$ is a proper ideal. Therefore, under \tiref a and 
\tiref b Corollary \ref{cor kelley} guarantees that 
$\Neg_*=\Neg(m)$ for some $m\in\Prob(2^\Omega)$: 
$m$ represents $\ge_*$. In fact, $f\ge_*0$ implies 
$\sup_{t>0}m(f<-t)=0$ while $\{f<-t\}\in\Neg_*$ leads 
first to $(f\vee-c)\sset{f<-t}\ge_*0$ for all $c>0$ (by 
\iref{convex}), then to $f\sset{f<-t}\ge_*0$ (by 
\iref{bounded}), hence to
$
f+\varepsilon
\ge_*
(f+\varepsilon)\sset{f\ge-\varepsilon}
\ge_*
0
$
for all $\varepsilon>0$ and, eventually, to $f\ge_*0$, 
(by \iref{robust}).
On the other hand, if $\ge_*$ is the ranking 
induced by some $m\in\Prob(2^\Omega)$,
then $\Neg_*=\Neg(m)$, \tiref a holds and
therefore $I_*=I_{\Neg(m)}$. Let 
$\Bor_n=\{A\subset\Omega:m(A)>1/n\}$ and
$f\in\Seq[0]{\Bor_n}$.
\begin{align*}
1/n
\le
m(f)
=
\inf_{N\in\Neg_*}m(\sset{N^c}f)
\le
\inf_{N\in\Neg_*}\sup_{\omega\in N^c}f(\omega)
=
I_{\Neg_*}(\Bor_n)
=
I_*(\Bor_n)
\end{align*}
so that \tiref b holds as well.
\end{proof}

This last result has a subjective probability interpretation:
a decision maker following a choice criterion that satisfies 
the above conditions \tiref a and \tiref b may be said to
take his decisions on a probabilistic basis. This means that
in principle a probability may be deduced from his behaviour.
We highlight that condition \tiref a would be enough to
imply that $\Neg_*$ is a proper ideal -- and therefore 
that $\Neg_*\subset\Neg(m)$ for some $m\in\Prob(2^\Omega)$
-- but this would not be enough to guarantee that such $m$
is unique (and thus inferable from his decisions).
Uniqueness indeed requires that \eqref{kelley *} is
satisfied, although this appears as a rather difficult
condition to establish in practical problems.

\bibliographystyle{acm}
\bibliography{
../Bib/MathBib,
../Bib/MyBib,
../Bib/EconBib,
../Bib/FinBib}

\end{document}